\newtheorem{theorem}{Theorem}
\newtheorem{lemma}{Lemma}
\newtheorem{proposition}{Proposition}
\newtheorem{corollary}{Corollary}
\newenvironment{introtheorem}[1]
  {\intro}
  {\endintro}
\newcommand{\tO}{\mathtt 0}                     % typewriter 0 for the digit 0
\newcommand{\tL}{\mathtt 1}                     % typewriter 1 for the digit 1
\newcommand{\tZ}{\mathtt 2}                     % typewriter 2 for the digit 2
\DeclareMathOperator{\e}{\mathrm{e}}                % exponential
\begin{document}
\title{A digit reversal property for Stern polynomials}
\author{Lukas Spiegelhofer}
\address{Institut f\"ur Diskrete Mathematik und Geometrie,
TU Wien\\
Wiedner Hauptstr. 8--10\\
1040 Wien, Austria}
\thanks{The author acknowledges support by the Austrian Science Fund (FWF), projects F5502-N26 and F5505-N26, which are part of the Special Research Program ``Quasi Monte Carlo Methods: Theory and Applications''.}

\begin{abstract}
We consider the following polynomial generalization of Stern's diatomic series:
let $s_1(x,y)=1$, and for $n\geq 1$ set
$s_{2n}(x,y)=s_n(x,y)$ and $s_{2n+1}(x,y)=x\,s_n(x,y)+y\,s_{n+1}(x,y)$.
The coefficient $[x^iy^j]s_n(x,y)$ is the number of hyperbinary expansions of $n-1$ with exactly $i$ occurrences of the digit $\mathtt 2$ and $j$ occurrences of $\mathtt 0$.
We prove that the polynomials $s_n$ are invariant under \emph{digit reversal},
that is, $s_n=s_{n^R}$, where $n^R$ is obtained from $n$ by reversing the binary expansion of $n$.

\end{abstract}
\maketitle

%{{{ Introduction
\section{Introduction}
The Stern sequence (also called Stern's diatomic sequence or Stern--Brocot sequence) $s$ is defined by the recurrence $s_1=1$, $s_{2n}=s_n$ and $s_{2n+1}=s_n+s_{n+1}$.
It was pointed out by Dijkstra~\cite{D1980}, \cite[pp.230--232]{D1982} that this sequence satisfies a symmetry property with respect to reversal of the binary expansion of $n$.
More precisely, for a positive integer $n$ having the proper base-$2$ expansion $n=\bigl(\varepsilon_\nu\varepsilon_{\nu-1}\ldots\varepsilon_0\bigr)_2$ we define
\[
n^R
=\sum_{0\leq i\leq\nu}\varepsilon_{\nu-i}2^i
=\bigl(\varepsilon_0\varepsilon_1\ldots\varepsilon_\nu\bigr)_2.
\]
\begin{introtheorem}{A}[Dijkstra]\label{thm:A}
$s_n=s_{n^R}$.
\end{introtheorem}
Stern's diatomic sequence is closely related to continued fractions (see Stern~\cite{S1858}, Lehmer~\cite{L1929}, Lind~\cite{L1969}, Graham, Knuth, Patashnik~\cite[Exercise~6.50]{GKP1989}):
\newline
if $n=(1^{k_0}0^{k_1}\cdots 1^{k_{r-2}}0^{k_{r-1}}1^{k_r})_2$,
then $s_n$ is the numerator of the continued fraction
$[k_0;k_1,\ldots,k_r]$.
Theorem~\ref{thm:A} is therefore the same as the statement that
$[k_0;k_1,\ldots,k_r]$ and $[k_r;k_{r-1},\ldots,k_0]$ have the same numerator,
which can be proved via continuants.

In~\cite{MS2012}, Morgenbesser and the author proved a digit reversal property for the correlation
\[\gamma_t(\alpha)=\lim_{N\rightarrow\infty}\frac 1N
\sum_{0\leq n<N}\e\bigl(\alpha \sigma_q(n+t)-\alpha \sigma_q(n)\bigr),
\]
where $\e(x)=\exp(2\pi i x)$ and $\sigma_q(n)$ is the sum of digits of $n$ in base $q$ ($q\geq 2$ an integer).
That is, we proved that $\gamma_t(\alpha)=\gamma_{t^R}(\alpha)$,
where the digit reversal is in base $q$.
We note that for the case $q=2$ this statement is a special case of Theorem~\ref{thm:main} below.

In this paper, we wish to give a refinement of Theorem~\ref{thm:A}, concerning \emph{hyperbinary expansions}.
A hyperbinary expansion~\cite{DE2015} of a nonnegative integer $n$ is a sequence $(\varepsilon_{\nu-1},\ldots,\varepsilon_0)\in \{\tO,\tL,\tZ\}^{\nu}$ such that $\sum_{0\leq i<\nu}\varepsilon_i2^i=n$.
We call such an expansion {\it proper} if either $\nu=0$ or $\nu>0$ and $\varepsilon_{\nu-1}\neq \tO$. Since we only work with proper expansions, we will usually omit the prefix ``proper''.
By induction, using the defining recurrence relation,
it is not difficult to see that $s_n$ is the number of hyperbinary expansions of $n-1$ (see also Proposition~\ref{prp:polynomial_rep}, which generalizes this property).
This property is stated as Theorem~5.2 in~\cite{R1990}, which seems to be the earliest (correct) appearance in the literature ---
we note that the statement on the bottom of page 275 of~\cite{C1964} is erroneous, which can be seen considering the case $n=5$. The same statement can be found on page 57 of~\cite{L1969}.
%}}} Introduction
%{{{ Main results
\section{Main results}
Our main theorem generalizes Theorem~\ref{thm:A}.
We define bivariate polynomials $s_n(x,y)$ by
\begin{align*}
s_1(x,y)&=1,\\
s_{2n}(x,y)&=s_n(x,y),\\
s_{2n+1}(x,y)&=x\,s_n(x,y)+y\,s_{n+1}(x,y).
\end{align*}
Note that this definition of Stern polynomials differs from the one given in~\cite{DS2007} and also from the definition in~\cite{KMP2007}.
However, the univariate polynomials $s_n(x,1)$ appear, up to a shift of the index $n$, in the article~\cite{BM2011} by Bates and Mansour: the authors write ``[\ldots]the $n$th term $f(n;q)$ of the $q$-analogue of the Calkin--Wilf sequence is the generating function for the number of hyperbinary expansions of $n$ according to the number of powers that are used exactly twice.'' This should be compared with Proposition~\ref{prp:polynomial_rep} below.
Moreover, $s_n(x,1)$ appears as the special case $t=1$ in~\cite{DE2015} (see~(1.2), (1.3) in that paper).
The bivariate polynomial $s_n(x,y)$, however, appears to be a new object of study.
We list the first few of these polynomials: we have
\begin{align*}
  s_1(x,y) &= 1,&                           s_2(x,y) &= 1,\\
  s_3(x,y) &= x+y,&                         s_4(x,y) &= 1,\\
  s_5(x,y) &= x+xy+y^2,&                    s_6(x,y) &= x+y,\\
  s_7(x,y) &= x^2+xy+y,&                    s_8(x,y) &= 1,\\
  s_9(x,y) &= x+xy+xy^2+y^3,&               s_{10}(x,y) &= x+xy+y^2,\\
  s_{11}(x,y) &= x^2+xy+y^2+x^2y+xy^2,&     s_{12}(x,y) &= x+y,\\
  s_{13}(x,y) &= x^2+xy+y^2+x^2y+xy^2,&     s_{14}(x,y) &= x^2+xy+y,\\
  s_{15}(x,y) &= y+xy+x^3+x^2y,&            s_{16}(x,y) &= 1,\\
  s_{17}(x,y) &= x+xy+xy^2+xy^3+y^4,&       s_{18}(x,y) &= x+xy+xy^2+y^3,\\
  s_{19}(x,y) &= x^2+xy+x^2y+xy^2+y^3+x^2y^2+xy^3,&  s_{20}(x,y) &= x+xy+y^2,\\
  s_{21}(x,y) &= x^2+2x^2y+2xy^2+y^3+x^2y^2+xy^3,&   s_{22}(x,y) &= x^2+xy+y^2\\
              &                                  &               &+x^2y+xy^2,\\
  s_{23}(x,y) &= xy+y^2+x^3+x^2y+xy^2+x^3y+x^2y^2,&  s_{24}(x,y) &=x+y ,\\
  s_{25}(x,y) &= x^2+xy+x^2y+xy^2+y^3+x^2y^2+xy^3,&  s_{26}(x,y) &=x^2+xy+y^2\\
              &                                   &              &+x^2y+xy^2
\end{align*}
and we see the notable identities $s_{11}(x,y)=s_{13}(x,y)$
and $s_{19}(x,y)=s_{25}(x,y)$, where $13=11^R$ and $25=19^R$.
In fact, we have the following symmetry property generalizing Theorem~A.
%{{{
\begin{theorem}\label{thm:main}
Let $n$ be a positive integer. Then
\[ s_n(x,y)=s_{n^R}(x,y). \]
\end{theorem}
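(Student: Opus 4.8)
The plan is to encode the recursion for $s_n(x,y)$ as a product of $2\times 2$ transfer matrices read off from the binary digits of $n$, and then to recognise digit reversal of $n$ as the reversal of this matrix product. The crux will be that this reversal is induced by an explicit anti-automorphism of the algebra of $2\times 2$ matrices over $\mathbb{Q}(x,y)$ which fixes both transfer matrices and is compatible with the boundary vectors involved.

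First I would introduce
\[
A=\begin{pmatrix}1&0\\x&y\end{pmatrix},\qquad B=\begin{pmatrix}x&y\\0&1\end{pmatrix},
\]
and note that the defining relations give $\binom{s_{2n}}{s_{2n+1}}=A\binom{s_n}{s_{n+1}}$ and $\binom{s_{2n+1}}{s_{2n+2}}=B\binom{s_n}{s_{n+1}}$. An immediate induction (base case $n=1$, using $s_1=s_2=1$) then yields: if $n=(\varepsilon_\nu\cdots\varepsilon_0)_2$ with $\varepsilon_\nu=1$ and we set $C_0:=A$, $C_1:=B$, then
\[
\begin{pmatrix}s_n\\s_{n+1}\end{pmatrix}=C_{\varepsilon_0}C_{\varepsilon_1}\cdots C_{\varepsilon_{\nu-1}}\binom{1}{1},
\]
the empty product being the identity; in particular $s_n=(1,0)\,C_{\varepsilon_0}\cdots C_{\varepsilon_{\nu-1}}\binom{1}{1}$.

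Next I would reduce to odd $n$: since $s_{2n}=s_n$ and $(2n)^R=n^R$, writing a general $n$ as $2^a m$ with $m$ odd gives $s_n=s_m$ and $s_{n^R}=s_{m^R}$, so it suffices to treat odd $n$; and $n=1$ is trivial since $1^R=1$. For odd $n\geq 3$ both $n$ and $n^R$ are odd, and writing $n=(1\,\varepsilon_{\nu-1}\cdots\varepsilon_1\,1)_2$ and using $(1,0)B=(x,y)=:r$ together with $c:=\binom{1}{1}$, the formula above gives
\[
s_n=r\,C_{\varepsilon_1}\cdots C_{\varepsilon_{\nu-1}}\,c,\qquad s_{n^R}=r\,C_{\varepsilon_{\nu-1}}\cdots C_{\varepsilon_1}\,c .
\]
Hence the theorem reduces to the identity $r\,M\,c=r\,M^R\,c$ for every product $M=C_{i_1}\cdots C_{i_k}$ of copies of $A$ and $B$, where $M^R:=C_{i_k}\cdots C_{i_1}$.

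This last identity is the main obstacle, and I would dispatch it using one auxiliary matrix. Put $D=\begin{pmatrix}x^2-x&xy\\xy&y^2-y\end{pmatrix}$; then $D$ is symmetric and $\det D=xy(1-x-y)\neq0$, so $D$ is invertible over $\mathbb{Q}(x,y)$, and one verifies directly the three identities $A^{T}D=DA$, $B^{T}D=DB$ and $Dc=(x+y-1)\,r^{T}$. The first two say that $\phi(X):=D^{-1}X^{T}D$ is an anti-automorphism fixing $A$ and $B$, whence $\phi(M)=M^{R}$ for every $M$ as above; the third, combined with $D=D^{T}$, gives $rD^{-1}=(x+y-1)^{-1}c^{T}$, and therefore
\[
r\,M^{R}\,c=r\,\phi(M)\,c=rD^{-1}M^{T}Dc=c^{T}M^{T}r^{T}=(rMc)^{T}=rMc,
\]
which proves the identity and hence the theorem. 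Every step but the choice of $D$ is a routine $2\times2$ computation; the conceptual reason such a $D$ exists is that $A$ and $A^{T}$ (and likewise $B$ and $B^{T}$) share the same pair of distinct eigenvalues and are therefore conjugate, and the real content is that a single symmetric matrix conjugates $A$ to $A^{T}$ and $B$ to $B^{T}$ simultaneously — it is precisely this simultaneity that forces the digit-reversal symmetry.
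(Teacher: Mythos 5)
Your proposal is correct, and while the setup coincides with the paper's, the key step is resolved by a genuinely different device. Like the paper, you encode the recurrence via the transfer matrices $A$ and $B$, reduce to odd $n$, and arrive at the same pivotal identity: $r\,M\,c = r\,M^R\,c$ for every word $M$ in $A,B$, with $r=(x\;y)$ and $c=\bigl(\begin{smallmatrix}1\\1\end{smallmatrix}\bigr)$ (the paper states this as the equality of $(x\;y)A(\varepsilon_1)\cdots A(\varepsilon_{\nu-1})\bigl(\begin{smallmatrix}1\\1\end{smallmatrix}\bigr)$ with its transposed, reversed counterpart). Where the paper proves this by induction on the word length, using the eight rewriting identities of Lemma~\ref{lem:reflection_matrices} to express $(x\;y)A(\varepsilon_1)A(\varepsilon_2)$ as a linear combination of $(x\;y)$ and $(x\;y)A(\varepsilon_2)$ with coefficients matching those of the transposed relations, you instead exhibit a single symmetric invertible intertwiner $D=\bigl(\begin{smallmatrix}x^2-x&xy\\xy&y^2-y\end{smallmatrix}\bigr)$ with $A^TD=DA$, $B^TD=DB$, $\det D=xy(1-x-y)$, and $Dc=(x+y-1)r^T$ --- all of which I have checked --- so that $X\mapsto D^{-1}X^TD$ is an anti-automorphism fixing $A$ and $B$ and the identity follows in one line by transposing the scalar $rMc$. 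Your route trades the paper's induction and eight verifications for three, is arguably more conceptual (it isolates \emph{why} the reversal symmetry holds: a simultaneous symmetric conjugation of $A,B$ to their transposes compatible with the boundary vectors), and would generalize readily to other transfer-matrix recurrences admitting such an intertwiner; the paper's inductive argument has the minor advantage of staying entirely within polynomial arithmetic, never needing to invert $D$ or work over $\mathbb{Q}(x,y)$, though your final identity is itself a polynomial identity once established.
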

%}}}
This theorem can be translated to a statement on hyperbinary expansions.
For integers $i,j\geq 0$ and $t\geq 1$ let $h_{i,j}(t)$ be the number of proper hyperbinary expansions $(\varepsilon_{\nu-1},\ldots,\varepsilon_0)$ of $t-1$ such that $|\{\ell: 0\leq \ell<\nu,\varepsilon_\ell=\tZ\}|=i$ and $|\{\ell:0\leq \ell<\nu,\varepsilon_\ell=\tO\}|=j$.
The following proposition connects the Stern polynomials $s_n(x,y)$ to hyperbinary expansions.
\begin{proposition}\label{prp:polynomial_rep}
Assume that $n\geq 1$ is an integer. Then
\[
\sum_{i,j\geq 0}h_n(i,j)x^iy^j=s_n(x,y).
\]
\end{proposition}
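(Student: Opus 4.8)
The plan is to introduce the bivariate generating polynomial $S_n(x,y):=\sum_{i,j\geq 0}h_n(i,j)\,x^i y^j$ of the proper hyperbinary expansions of $n-1$, weighted by a factor $x$ for each digit $\tZ$ and a factor $y$ for each digit $\tO$, and to show that $S_n$ obeys the same recurrence as $s_n$. Since the only proper hyperbinary expansion of $0$ is the empty word (the case $\nu=0$), we have $S_1=1=s_1$, so the identity $S_n=s_n$ will follow by strong induction on $n$ once the recurrence is in place.

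The core of the argument is to prove, for every $n\geq 1$, the two identities $S_{2n}=S_n$ and $S_{2n+1}=x\,S_n+y\,S_{n+1}$ by inspecting and then deleting the least significant digit $\varepsilon_0$ of a proper hyperbinary expansion. For the first, $S_{2n}$ enumerates proper hyperbinary expansions of the odd number $2n-1$, so $\varepsilon_0=\tL$ is forced; deleting $\varepsilon_0$ produces a proper hyperbinary expansion of $n-1$, and this deletion is a bijection that changes neither the number of $\tZ$'s nor the number of $\tO$'s, whence $S_{2n}=S_n$. For the second, $S_{2n+1}$ enumerates proper hyperbinary expansions of the positive even number $2n$, so $\varepsilon_0\in\{\tO,\tZ\}$, and I would split according to this digit: if $\varepsilon_0=\tZ$, deletion yields a proper hyperbinary expansion of $n-1$ with one fewer $\tZ$, contributing $x\,S_n$; if $\varepsilon_0=\tO$, deletion yields a proper hyperbinary expansion of $n$ with one fewer $\tO$, contributing $y\,S_{n+1}$. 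In both cases deletion is a bijection onto the indicated set, with inverse the map appending $\tZ$ (resp.\ $\tO$) as a new least significant digit, so summing the two contributions gives $S_{2n+1}=x\,S_n+y\,S_{n+1}$.

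The induction is then immediate: for $N\geq 2$, writing $N=2n$ gives $n<N$ and $S_N=S_n=s_n=s_N$, while writing $N=2n+1$ (so $N\geq 3$, hence $n\geq 1$ and $n+1\leq 2n<N$) gives $S_N=x\,S_n+y\,S_{n+1}=x\,s_n+y\,s_{n+1}=s_N$; together with $S_1=s_1$ this is the proposition. The one place that requires genuine care --- the step I would flag as the main, if minor, obstacle --- is the interplay of the ``proper'' convention with the smallest cases: one must check that the digit-deletion maps and their digit-appending inverses really do land among \emph{proper} expansions when $\nu$ or the represented integer is as small as $0$ or $1$ (which happens exactly for $n=1$), and that the empty word is handled consistently throughout. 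Once the cases are separated these checks are routine, so apart from them the proof is pure bookkeeping.
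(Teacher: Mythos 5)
Your proposal is correct and follows essentially the same route as the paper: the paper first establishes the digit-deletion recurrence coefficient-wise for $h_n(i,j)$ (its Proposition~\ref{prp:hyper_rec}, proved by exactly the bijections you describe) and then sums to get the polynomial identity by induction, whereas you phrase the same bijections directly at the level of the generating polynomial. Your attention to the propriety of expansions in the boundary cases is the right point to check and is handled correctly.
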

In other words, we have $h_n(i,j)=\bigl[x^iy^j\bigr]s_n(x,y)$,
that is, the polynomial $s_n(x,y)$ encodes the number of hyperbinary expansions of $n-1$ having a given number of $\tZ$s and $\tO$s.
Let us illustrate this proposition by an example.
The polynomial $s_{21}(x,y)$ can be obtained by listing the hyperbinary expansions of $20$:
\begin{align*}
\tL\tZ\tL\tZ&\qquad x^2&\tL\tZ\tZ\tO&\qquad x^2y\\
\tZ\tO\tL\tZ&\qquad x^2y&\tZ\tL\tO\tO&\qquad xy^2\\
\tL\tO\tO\tL\tZ&\qquad xy^2&\tL\tO\tL\tO\tO&\qquad y^3\\
\tZ\tO\tZ\tO&\qquad x^2y^2&\tL\tO\tO\tZ\tO&\qquad xy^3.
\end{align*}
Combining Theorem~\ref{thm:main} and Proposition~\ref{prp:polynomial_rep},
we immediately get the following corollary.
\begin{corollary}
Let $n\geq 1$ and $i,j\geq 0$. We have
\[ h_n(i,j)=h_{n^R}(i,j). \]
\end{corollary}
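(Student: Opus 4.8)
The statement is immediate from the two results already in place. Applying Proposition~\ref{prp:polynomial_rep} to $n$ and to $n^R$, and inserting Theorem~\ref{thm:main} in the middle, gives
\[
\sum_{i,j\geq 0}h_n(i,j)\,x^iy^j
=s_n(x,y)=s_{n^R}(x,y)
=\sum_{i,j\geq 0}h_{n^R}(i,j)\,x^iy^j ,
\]
and comparing the coefficient of $x^iy^j$ on both sides yields $h_n(i,j)=h_{n^R}(i,j)$. So all of the content sits in the two ingredients. One could instead attack the corollary directly, by exhibiting an explicit bijection between the proper hyperbinary expansions of $n-1$ and of $n^R-1$ that preserves the number of $\tZ$'s and of $\tO$'s, but such a map looks considerably more awkward to produce than the polynomial identity, so I would take the algebraic route and, below, only indicate how I would prove its substantive half, Theorem~\ref{thm:main}.

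For Theorem~\ref{thm:main} I would pass to transfer matrices for the pair $(s_n,s_{n+1})$. With $v_n=\binom{s_n}{s_{n+1}}$ the recursion reads $v_{2n}=Lv_n$ and $v_{2n+1}=Rv_n$, where
\[
L=\begin{pmatrix}1&0\\x&y\end{pmatrix},\qquad R=\begin{pmatrix}x&y\\0&1\end{pmatrix},
\]
so that, scanning the binary digits of $n=(1\,\delta_{\nu-1}\cdots\delta_1\delta_0)_2$ from the least significant end, $s_n=(1,0)\,M_{\delta_0}M_{\delta_1}\cdots M_{\delta_{\nu-1}}\binom{1}{1}$ with $M_0=L$, $M_1=R$. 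An even $n=2m$ has $s_n=s_m$ and $n^R=m^R$, so by induction I may assume $n$ odd; then $\delta_0=1$, so $(1,0)R=(x,y)$, and $n^R$ is again odd of the same binary length, whence the same formula applied to $n^R$ reduces Theorem~\ref{thm:main} to the word-reversal identity
\[
(x,y)\,M_{\varepsilon_1}M_{\varepsilon_2}\cdots M_{\varepsilon_k}\binom{1}{1}
=(x,y)\,M_{\varepsilon_k}\cdots M_{\varepsilon_2}M_{\varepsilon_1}\binom{1}{1}
\]
valid for every binary word $\varepsilon_1\varepsilon_2\cdots\varepsilon_k$.

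To establish this I would hunt for a symmetric matrix $Q$ that conjugates each generator to its transpose, $QM_\varepsilon Q^{-1}=M_\varepsilon^{\top}$ for $\varepsilon\in\{0,1\}$, normalised so that $Q\binom{1}{1}$ is a scalar multiple of $\binom{x}{y}$. A short undetermined-coefficients computation makes this choice unique, namely
\[
Q=\begin{pmatrix}x(x-1)&xy\\xy&y(y-1)\end{pmatrix},\qquad Q\binom{1}{1}=(x+y-1)\binom{x}{y},
\]
with $\det Q=-xy(x+y-1)\neq 0$, so $Q$ is invertible over $\mathbb{Q}(x,y)$, and the relations $QL=L^{\top}Q$, $QR=R^{\top}Q$ are then a one-line check. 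Given $Q$, the word-reversal identity follows by transposing its left-hand side, which is a scalar: for $A=M_{\varepsilon_1}\cdots M_{\varepsilon_k}$ one has $A^{\top}=M_{\varepsilon_k}^{\top}\cdots M_{\varepsilon_1}^{\top}=QA^{R}Q^{-1}$, where $A^{R}$ denotes the reversed product, and hence
\[
(x,y)\,A\binom{1}{1}=(1,1)\,A^{\top}\binom{x}{y}=\bigl[(1,1)Q\bigr]\,A^{R}\,\bigl[Q^{-1}\binom{x}{y}\bigr]=(x,y)\,A^{R}\binom{1}{1},
\]
since $(1,1)Q$ is proportional to $(x,y)$, $Q^{-1}\binom{x}{y}$ is proportional to $\binom{1}{1}$, and the two proportionality constants are reciprocals of one another. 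I expect the only real obstacle to be conceptual rather than computational: recognising that the pair $(s_n,s_{n+1})$ (not $s_n$ alone) is what should be propagated, and that a transpose-intertwiner $Q$ with the right normalisation exists at all; once the form of $Q$ is guessed, every remaining step is routine. Finally, Proposition~\ref{prp:polynomial_rep} I would obtain by an easy induction matching the three clauses $s_1=1$, $s_{2n}=s_n$, $s_{2n+1}=xs_n+ys_{n+1}$ against the effect of deleting the last digit of a proper hyperbinary expansion: those of $2n-1$ correspond bijectively to those of $n-1$ via a trailing $\tL$, while those of $2n$ split into the ones ending in $\tO$ (matched with expansions of $n$, weighted by $y$) and the ones ending in $\tZ$ (matched with expansions of $n-1$, weighted by $x$).
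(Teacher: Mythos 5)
Your deduction of the corollary itself is exactly the paper's: read off coefficients from $s_n=s_{n^R}$ via Proposition~\ref{prp:polynomial_rep}. The genuine divergence is in how you propose to prove the substantive ingredient, Theorem~\ref{thm:main}. Both you and the paper reduce (identically) to the word-reversal identity for products of $A(\tO)$ and $A(\tL)$ sandwiched between $\left(\begin{smallmatrix}x&y\end{smallmatrix}\right)$ and $\binom{1}{1}$, but the paper then argues by induction on the length of the word using Lemma~\ref{lem:reflection_matrices}: in each of the four cases, $\left(\begin{smallmatrix}x&y\end{smallmatrix}\right)A(\varepsilon_1)A(\varepsilon_2)$ and $\left(\begin{smallmatrix}1&1\end{smallmatrix}\right){}^t\!A(\varepsilon_1){}^t\!A(\varepsilon_2)$ satisfy the same two-term linear relation with the same coefficients $\alpha,\beta$, so the identity propagates from shorter words. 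You instead exhibit a single symmetric intertwiner $Q=\left(\begin{smallmatrix}x(x-1)&xy\\xy&y(y-1)\end{smallmatrix}\right)$ with $QL=L^{\top}Q$, $QR=R^{\top}Q$ and $Q\binom{1}{1}=(x+y-1)\binom{x}{y}$ --- I verified all three identities and $\det Q=-xy(x+y-1)$ --- and then transpose the scalar. This is a correct and genuinely different, non-inductive argument: it concentrates the whole content of the theorem in one conjugation identity, at the mild cost of inverting $Q$ over $\mathbb{Q}(x,y)$ (harmless since both sides of the final identity are polynomials; one can even avoid inversion entirely by multiplying through by $x+y-1=\left(\begin{smallmatrix}1&1\end{smallmatrix}\right)Q\binom{1}{1}/(x+y-1)$-style bookkeeping and cancelling the nonzero polynomial $x+y-1$ at the end). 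The paper's route is more elementary and stays inside polynomial arithmetic, but needs eight separate identities and an induction; yours needs one matrix to be guessed and then a three-line computation. Your closing sketch of Proposition~\ref{prp:polynomial_rep} by deleting the last digit is the same argument the paper gives, phrased at the level of generating functions rather than via the coefficient recurrence of Proposition~\ref{prp:hyper_rec}.
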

For example, we list the hyperbinary expansions of
$18=(\tL\tO\tO\tL\tL)_2-1$ (first row) and 
$24=(\tL\tL\tO\tO\tL)_2-1$ (second row):
\[
\begin{array}{ccccccc}
\tL\tO\tO\tL\tO&\tZ\tO\tL\tO&\tL\tZ\tL\tO&\tL\tO\tO\tO\tZ&\tZ\tO\tO\tZ&\tL\tZ\tO\tZ&\tL\tL\tZ\tZ\\
\tL\tL\tO\tO\tO&\tL\tO\tL\tZ\tO&\tL\tO\tL\tL\tZ&\tL\tO\tZ\tO\tO&\tZ\tZ\tO\tO&\tZ\tL\tZ\tO&\tZ\tL\tL\tZ
\end{array}
\]
By the corollary there is a one-to-one correspondence between these expansions.
%}}} Main results
%{{{ Proofs
\section{Proofs}
%{{{ proof of Theorem thm:main
\subsection{Proof of Theorem~\ref{thm:main}}
The main argument, which represents the induction step in the proof of the theorem, is the following lemma.
%{{{ Lemma: main argument about a matrix identity
\begin{lemma}\label{lem:reflection_matrices}
Let
\[
A=\left(\begin{matrix}1&0\\x&y\end{matrix}\right)
\quad\textrm{ and }\quad
B=\left(\begin{matrix}x&y\\0&1\end{matrix}\right)
.\]
If ${}^t\!A$ denotes the transpose of the matrix $A$, the following identities for $1\times 2$-matrices hold.
\begin{equation*}
\begin{array}{c@{}c@{\,}c@{\,}c@{\,}c@{\,}c@{\,}c@{\,}c@{\,}c}
  \left(\begin{matrix}x&y\end{matrix}\right)&AA
&=&
  (-y)&\left(\begin{matrix}x&y\end{matrix}\right)
  &+&
  (y+1)&\left(\begin{matrix}x&y\end{matrix}\right)&A,
\\[1mm]
  \left(\begin{matrix}1&1\end{matrix}\right)&{}^t\!A{}^t\!A
&=&
  (-y)&\left(\begin{matrix}1&1\end{matrix}\right)
  &+&
  (y+1)&\left(\begin{matrix}1&1\end{matrix}\right)&{}^t\!A,
\\[4mm]
  \left(\begin{matrix}x&y\end{matrix}\right)&AB
&=&
  x&\left(\begin{matrix}x&y\end{matrix}\right)
  &+&y&\left(\begin{matrix}x&y\end{matrix}\right)&B,
\\[1mm]
  \left(\begin{matrix}1&1\end{matrix}\right)&{}^t\!A{}^t\!B
&=&
  x&\left(\begin{matrix}1&1\end{matrix}\right)
  &+&y&\left(\begin{matrix}1&1\end{matrix}\right)&{}^t\!B,
\\[4mm]
  \left(\begin{matrix}x&y\end{matrix}\right)&BA
&=&
  y&\left(\begin{matrix}x&y\end{matrix}\right)
  &+&x&\left(\begin{matrix}x&y\end{matrix}\right)&A,
\\[1mm]
  \left(\begin{matrix}1&1\end{matrix}\right)&{}^t\!B{}^t\!A
&=&
  y&\left(\begin{matrix}1&1\end{matrix}\right)
  &+&x&\left(\begin{matrix}1&1\end{matrix}\right)&{}^t\!A,
\\[4mm]
  \left(\begin{matrix}x&y\end{matrix}\right)&BB
&=&
  (-x)&\left(\begin{matrix}x&y\end{matrix}\right)
  &+&(x+1)&\left(\begin{matrix}x&y\end{matrix}\right)&B,
\\[1mm]
  \left(\begin{matrix}1&1\end{matrix}\right)&{}^t\!B{}^t\!B
&=&
  (-x)&\left(\begin{matrix}1&1\end{matrix}\right)
  &+&(x+1)&\left(\begin{matrix}1&1\end{matrix}\right)&{}^t\!B.
\end{array}
\end{equation*}
\end{lemma}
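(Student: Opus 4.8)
The plan is to verify all eight identities by direct computation, organized so that the eight reduce to essentially four after one observation about transposition. First I would note that the two matrices satisfy $B = {}^t\!A$ after swapping the roles of the two coordinates; more precisely, if $J=\left(\begin{smallmatrix}0&1\\1&0\end{smallmatrix}\right)$ is the antidiagonal involution, then $J A J = {}^t\!B$ and $J B J = {}^t\!A$. Since $\left(\begin{matrix}1&1\end{matrix}\right)J = \left(\begin{matrix}1&1\end{matrix}\right)$ and $\left(\begin{matrix}x&y\end{matrix}\right)J = \left(\begin{matrix}y&x\end{matrix}\right)$, each identity in the left column (the ones with row vector $\left(\begin{matrix}x&y\end{matrix}\right)$) can be transformed into the corresponding identity in the right column (row vector $\left(\begin{matrix}1&1\end{matrix}\right)$, transposed matrices) by right-multiplying by $J$, inserting $J^2=I$ between factors, and using $JAJ={}^t\!B$, $JBJ={}^t\!A$ — at the cost of swapping $x\leftrightarrow y$ in the scalar coefficients, which is exactly the pattern one sees comparing, say, the $AA$ line with the ${}^t\!A{}^t\!A$ line. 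So it suffices to prove the four identities in the left column.

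Next I would simply carry out the four matrix products. We have $\left(\begin{matrix}x&y\end{matrix}\right)A = \left(\begin{matrix}x+xy & y^2\end{matrix}\right)$ and $\left(\begin{matrix}x&y\end{matrix}\right)B = \left(\begin{matrix}x^2 & xy+y\end{matrix}\right)$. From these:
\begin{align*}
\left(\begin{matrix}x&y\end{matrix}\right)AA &= \left(\begin{matrix}x+xy & y^2\end{matrix}\right)A = \left(\begin{matrix}x+xy+xy^2 & y^3\end{matrix}\right),\\
\left(\begin{matrix}x&y\end{matrix}\right)AB &= \left(\begin{matrix}x+xy & y^2\end{matrix}\right)B = \left(\begin{matrix}x^2+x^2y & xy+xy^2+y^2\end{matrix}\right),\\
\left(\begin{matrix}x&y\end{matrix}\right)BA &= \left(\begin{matrix}x^2 & xy+y\end{matrix}\right)A = \left(\begin{matrix}x^2+xy^2+y^2 & xy^2+y^2\end{matrix}\right),\\
\left(\begin{matrix}x&y\end{matrix}\right)BB &= \left(\begin{matrix}x^2 & xy+y\end{matrix}\right)B = \left(\begin{matrix}x^3 & x^2y+xy^2+xy+y\end{matrix}\right).
\end{align*}
Then I would check that each right-hand side matches the claimed linear combination: for instance $(-y)\left(\begin{matrix}x&y\end{matrix}\right) + (y+1)\left(\begin{matrix}x+xy&y^2\end{matrix}\right) = \left(\begin{matrix}-xy + (y+1)(x+xy) & -y^2 + (y+1)y^2\end{matrix}\right) = \left(\begin{matrix}x+xy+xy^2 & y^3\end{matrix}\right)$, which agrees with $\left(\begin{matrix}x&y\end{matrix}\right)AA$. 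The other three are comparisons of two coordinates each, and every entry is a polynomial of degree at most three in two variables, so these are short to confirm.

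There is essentially no obstacle here: the lemma is a finite identity among explicit $2\times 2$ matrices with polynomial entries, so correctness is a matter of bookkeeping. The only thing worth being careful about is the transposition reduction — one must get the conjugation by $J$ right and track that it really does induce the swap $x\leftrightarrow y$ on the scalar coefficients while fixing $\left(\begin{matrix}1&1\end{matrix}\right)$; if that feels delicate one can skip the reduction entirely and just expand all eight products, which doubles the arithmetic but removes any subtlety. Either way the proof is a direct verification, and I would present it compactly, giving the four products in the left column in full and remarking that the right column follows by the transpose symmetry described above.
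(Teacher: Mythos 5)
Your overall strategy---direct verification of a finite list of polynomial identities---is exactly what the paper intends; its proof of the lemma is simply ``by simple calculation, left to the reader.'' However, as written your argument has two concrete problems. The first is the reduction from eight identities to four: the conjugation claims are false. One computes
$JAJ=\left(\begin{smallmatrix}y&x\\0&1\end{smallmatrix}\right)$,
which is $B$ with $x$ and $y$ interchanged, whereas
${}^t\!B=\left(\begin{smallmatrix}x&0\\y&1\end{smallmatrix}\right)$;
likewise $JBJ=\left(\begin{smallmatrix}1&0\\y&x\end{smallmatrix}\right)$ is $A$ with the variables swapped, not ${}^t\!A$. Consequently, conjugating by $J$ and then applying the substitution $x\leftrightarrow y$ exchanges $A$ with $B$ and ${}^t\!A$ with ${}^t\!B$: it pairs the $AA$ line with the $BB$ line and the $AB$ line with the $BA$ line \emph{within} each column (since $\left(\begin{smallmatrix}x&y\end{smallmatrix}\right)J=\left(\begin{smallmatrix}y&x\end{smallmatrix}\right)$ and $\left(\begin{smallmatrix}1&1\end{smallmatrix}\right)J=\left(\begin{smallmatrix}1&1\end{smallmatrix}\right)$), but it never carries a $\left(\begin{smallmatrix}x&y\end{smallmatrix}\right)$-identity to a $\left(\begin{smallmatrix}1&1\end{smallmatrix}\right)$-identity. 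Verifying only the left column therefore leaves the four right-column identities unproved. (If you want a genuine shortcut for half the list: $A$ and ${}^t\!A$ are triangular with eigenvalues $1$ and $y$, so Cayley--Hamilton gives $A^2=(y+1)A-yI$ and likewise for ${}^t\!A$, which yields lines one and two for \emph{arbitrary} row vectors, and similarly $B^2=(x+1)B-xI$ gives lines seven and eight; the four mixed-product lines really do depend on the specific vectors and must be checked by hand.)

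The second problem is arithmetic: two of your four displayed products are wrong. One has
$\left(\begin{smallmatrix}x&y\end{smallmatrix}\right)BA=\left(\begin{smallmatrix}x^2&xy+y\end{smallmatrix}\right)A=\left(\begin{smallmatrix}x^2+x^2y+xy & xy^2+y^2\end{smallmatrix}\right)$,
not $\left(\begin{smallmatrix}x^2+xy^2+y^2 & xy^2+y^2\end{smallmatrix}\right)$, and
$\left(\begin{smallmatrix}x&y\end{smallmatrix}\right)BB=\left(\begin{smallmatrix}x^3 & x^2y+xy+y\end{smallmatrix}\right)$,
without the extra $xy^2$ term. The corrected values do match the claimed combinations
$y\left(\begin{smallmatrix}x&y\end{smallmatrix}\right)+x\left(\begin{smallmatrix}x&y\end{smallmatrix}\right)A$ and
$(-x)\left(\begin{smallmatrix}x&y\end{smallmatrix}\right)+(x+1)\left(\begin{smallmatrix}x&y\end{smallmatrix}\right)B$,
so the lemma itself is fine, but as written your verification of those two lines does not go through. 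The fallback you mention---expanding all eight products without any symmetry reduction---is the safe and correct course here.
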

%}}}

\noindent
The proof is by simple calculation and is left to the reader.\qed

\smallskip\noindent
To prove Theorem~\ref{thm:main}, we let $n\geq 1$ be an odd integer.
This is no loss of generality since we can deal with the even case by repeatedly using the relation $s_{2n}(x,y)=s_n(x,y)$.
Let $n=\sum_{i\leq \nu}\varepsilon_i2^i$ be the binary representation of $n$ and $\varepsilon_\nu\neq 0$.
We prove the theorem by induction on $\nu$.
The case $\nu\leq 1$ is trivial, since in this case we have $n^R=n$.
We write
$A(\tO)=\left(\begin{smallmatrix}1&0\\x&y\end{smallmatrix}\right)$ and
$A(\tL)=\left(\begin{smallmatrix}x&y\\0&1\end{smallmatrix}\right)$.
By a simple application of the recurrence relation we have
$\left(\begin{smallmatrix}s_{2n}\\s_{2n+1}\end{smallmatrix}\right)
=A(\tO)
\left(\begin{smallmatrix}s_n\\s_{n+1}\end{smallmatrix}\right)$
and
$\left(\begin{smallmatrix}s_{2n+1}\\s_{2n+2}\end{smallmatrix}\right)
=A(\tL)
\left(\begin{smallmatrix}s_n\\s_{n+1}\end{smallmatrix}\right)$
for all $n\geq 1$.
Since $n$ is odd and $s_1(x,y)=s_2(x,y)=1$, it follows from these identities that
\begin{equation}\label{eqn:transition_matrices}
s_n(x,y)
=
\left(\begin{matrix}x&y\end{matrix}\right)
A(\varepsilon_1)\cdots A(\varepsilon_{\nu-1})
\left(\begin{matrix}1\\1\end{matrix}\right).
\end{equation}
The statement of the theorem is equivalent to the assertion that
\begin{equation}\label{eqn:reflection_matrix_product}
\left(\begin{matrix}x&y\end{matrix}\right)
A(\varepsilon_1)\cdots A(\varepsilon_{\nu-1})
\left(\begin{matrix}1\\1\end{matrix}\right)
=
\left(\begin{matrix}1&1\end{matrix}\right)
{}^t\!A(\varepsilon_1)\cdots {}^t\!A(\varepsilon_{\nu-1})
\left(\begin{matrix}x\\y\end{matrix}\right)
\end{equation}
for all $\nu\geq 1$ and all finite sequences $(\varepsilon_1,\ldots,\varepsilon_{\nu-1})$ in $\{\tO,\tL\}$.
We prove the identity~\eqref{eqn:reflection_matrix_product} by induction on $\nu$, using Lemma \ref{lem:reflection_matrices}.
This identity is obvious for $\nu\leq 2$.
For $\nu>2$ we have four cases, corresponding to the four possible values of $(\varepsilon_1,\varepsilon_2)$.
By Lemma \ref{lem:reflection_matrices}, in each of the four cases there exist coefficients $\alpha$ and $\beta$ such that
\[
  \left(\begin{matrix}x&y\end{matrix}\right)A(\varepsilon_1)A(\varepsilon_2)
=
  \alpha\left(\begin{matrix}x&y\end{matrix}\right)
+
  \beta\left(\begin{matrix}x&y\end{matrix}\right)A(\varepsilon_2)
\]
and
\[  
  \left(\begin{matrix}1&1\end{matrix}\right){}^t\!A(\varepsilon_1){}^t\!A(\varepsilon_2)
=
  \alpha\left(\begin{matrix}1&1\end{matrix}\right)
+
  \beta\left(\begin{matrix}1&1\end{matrix}\right){}^t\!A(\varepsilon_2)
.
\]
Applying the induction hypothesis (\ref{eqn:reflection_matrix_product})
to the sequences $(\varepsilon_2,\ldots,\varepsilon_{\nu-1})$ and
$(\varepsilon_3,\ldots,\varepsilon_{\nu-1})$ we obtain the statement of the theorem.\qed

%}}} Proof of thm:main
%{{{ Proof of prp:polynomial_rep
\subsection{Proof of Proposition~\ref{prp:polynomial_rep}}
In order to prove Proposition~\ref{prp:polynomial_rep}, we use the following recurrence for $h_n(i,j)$ (see~\cite{DKS2016}).
%{{{ Proposition
\begin{proposition}\label{prp:hyper_rec}
Let $n\geq 1$. Then
\begin{equation}\label{eqn:hyperbinary_recurrence}
\begin{array}{lcll}
h_1(0,0)&=&1,&\\
h_1(i,j)&=&0&\mbox{for }(i,j)\neq (0,0),\\
h_{2n}(i,j)&=&h_n(i,j)&\mbox{for }i,j\geq 0,\\
h_{2n+1}(i,0)&=&h_n(i-1,0)&\mbox{for }i\geq 1,\\
h_{2n+1}(0,j)&=&h_{n+1}(0,j-1)&\mbox{for }j\geq 1,\\
h_{2n+1}(i,j)&=&h_n(i-1,j)+h_{n+1}(i,j-1)&\mbox{for }i,j\geq 1.
\end{array}
\end{equation}
Moreover, $h_t(0,0)=0$ if $t$ is not a power of $2$.
\end{proposition}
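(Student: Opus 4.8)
The plan is to prove the recurrence \eqref{eqn:hyperbinary_recurrence} by a direct combinatorial case analysis on the last digit $\varepsilon_0$ of a proper hyperbinary expansion, keeping careful track of the index shift: $h_m(i,j)$ counts proper expansions of $m-1$, not of $m$. First I would settle the base case: the unique proper hyperbinary expansion of $0$ is the empty sequence, which contains no $\tZ$ and no $\tO$, so $h_1(0,0)=1$ and $h_1(i,j)=0$ for $(i,j)\neq(0,0)$.

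For the even-index relation, let $(\varepsilon_{\nu-1},\ldots,\varepsilon_0)$ be a proper hyperbinary expansion of $2n-1$. Since $2n-1$ is odd we must have $\varepsilon_0=\tL$, and then $(\varepsilon_{\nu-1},\ldots,\varepsilon_1)$ is a proper hyperbinary expansion of $\bigl((2n-1)-1\bigr)/2=n-1$ with the same number of $\tZ$s and of $\tO$s. Conversely, appending $\tL$ to a proper expansion of $n-1$ yields a proper expansion of $2n-1$ (properness depends only on the unchanged leading digit, and $2n-1\geq1$ rules out any degenerate case). This bijection gives $h_{2n}(i,j)=h_n(i,j)$.

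For the odd-index relation, let $(\varepsilon_{\nu-1},\ldots,\varepsilon_0)$ be a proper hyperbinary expansion of $(2n+1)-1=2n$; as $2n$ is even, $\varepsilon_0\in\{\tO,\tZ\}$. If $\varepsilon_0=\tO$ then $\nu\geq2$ (because $2n>0$), so $(\varepsilon_{\nu-1},\ldots,\varepsilon_1)$ is a proper hyperbinary expansion of $n$ with one fewer $\tO$ and the same number of $\tZ$s; expansions of this shape are counted by $h_{n+1}(i,j-1)$. If $\varepsilon_0=\tZ$ then $\sum_{k\geq1}\varepsilon_k2^{k-1}=n-1$, so $(\varepsilon_{\nu-1},\ldots,\varepsilon_1)$ is a proper hyperbinary expansion of $n-1$ with one fewer $\tZ$ and the same number of $\tO$s, counted by $h_n(i-1,j)$. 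These two subcases are mutually exclusive and exhaustive, so summing their contributions produces all the stated identities for $h_{2n+1}$: the full formula when $i,j\geq1$, and, when $i=0$ or $j=0$, the same formula with the term whose argument would be negative dropped (and $h_{2n+1}(0,0)=0$ for $n\geq1$, since a string of $\tL$s represents an odd number while $2n$ is even and positive).

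The ``Moreover'' clause comes from the same observation: a proper hyperbinary expansion with $i=j=0$ consists of $\tL$s only, so one of length $\nu$ represents $2^\nu-1$; hence $h_t(0,0)\neq0$ exactly when $t-1=2^\nu-1$ for some $\nu\geq0$, i.e.\ when $t$ is a power of $2$. I do not expect a real obstacle: the argument is elementary bijective bookkeeping, and the only points that demand attention are the index shift (the $\tO$-case of $h_{2n+1}$ reduces to $h_{n+1}$, the $\tZ$-case to $h_n$) and the few small-$n$ and empty-expansion edge cases flagged above, which is precisely where the hypothesis $n\geq1$ enters.
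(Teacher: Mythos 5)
Your proof is correct and follows essentially the same route as the paper's: a case analysis on the last digit of a proper hyperbinary expansion of $2n-1$ or $2n$, with deletion of that digit giving the bijections that yield each line of the recurrence, and the observation that all-$\tL$ expansions represent $2^\nu-1$ giving the final clause. Your bookkeeping of the index shifts ($\tO$-case to $h_{n+1}$, $\tZ$-case to $h_n$) and of the properness/edge cases matches the paper's argument, just spelled out in slightly more detail.
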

%}}}
%{{{
\begin{proof}
The first two lines follow from the fact that the empty tuple $()$ is the only hyperbinary expansion of $0$. 

The hyperbinary expansions of $2n-1$ are in bijection with the hyperbinary expansions of $n-1$ by deleting the lowest digit, which is a $\tL$. This explains the third line of~\eqref{eqn:hyperbinary_recurrence}.

A hyperbinary expansion of $2n$ without $\tO$ necessarily ends with the digit $\tZ$, and the bijection is by deleting this digit. This gives line number four.

There is exactly one hyperbinary expansion of $2n$ without $\tZ$, and it ends with $\tO$. The same argument applies here.

The sixth line is a combination of arguments as above.

Finally, integers having the binary expansion $(\tL\cdots\tL)$ are the only ones without $\tO$ and $\tZ$, which proves the last line.
\end{proof}
%}}}
In order to prove Proposition~\ref{prp:polynomial_rep}, we proceed by induction.
The identity is trivial for $n=1$.
Assume that $n=2u$ for some $u\geq 1$. We have
$h_n(i,j)=h_u(i,j)$ and $s_n(x,y)=s_u(x,y)$.
If $n=2u+1$, we have
\begin{align*}
\sum_{\substack{i\geq 0\\j\geq 0}}h_n(i,j)x^iy^j&=h_n(0,0)
+\sum_{j\geq 1}h_n(0,j)y^j
+\sum_{i\geq 1}h_n(i,0)x^i
+\sum_{i,j\geq 1}h_n(i,j)x^iy^j\\
&=\sum_{j\geq 1}h_{u+1}(0,j-1)y^j
+\sum_{i\geq 1}h_u(i-1,0)x^i
\\&+\sum_{i,j\geq 1}\bigl(h_u(i-1,j)+h_{u+1}(i,j-1)\bigr)x^iy^j
\\&+\sum_{\substack{i\geq 1\\j\geq 0}}h_u(i-1,j)x^iy^j
+\sum_{\substack{i\geq 0\\j\geq 1}}h_u(i,j-1)x^iy^j\\
&=x s_u(x,y)+y s_{u+1}(x,y)=s_n(x,y)
,
\end{align*}
which proves the desired assertion.
\qed
%}}} Proof of Proposition prp:polynomial_rep
%}}} Proofs
%{{{ Bibliography

%}}} Bibliography

\end{document}